\newtheorem{theorem}{Theorem}
\newtheorem*{lemma}{Lemma}
\theoremstyle{definition}
\newtheorem*{definition}{Definition}
\DeclareMathOperator{\Length}{L}
\begin{document}

\title{On Shirshov bases of graded algebras}

\author{Fedor Petrov}
\address{St. Petersburg Department of Steklov Institute of Mathematics
and St. Petersburg State University}
\email{fedyapetrov@gmail.com}

\author{Pasha Zusmanovich}
\address{Department of Mathematics, Tallinn University of Technology}
\email{pasha.zusmanovich@ttu.ee}

\date{last revised September 4, 2012}
\thanks{\textsf{arXiv:1201.5837}; Israel J. Math., to appear}

\begin{abstract}
We prove that if the neutral component in a finitely-generated associative 
algebra graded by a finite group has a Shirshov base, then so does the whole 
algebra.
\end{abstract}

\maketitle

The following result, established by Shirshov in 1957, is one of the 
cornerstones of the modern PI theory.
\begin{theorem}[Shirshov's height theorem]
For any finitely-generated PI algebra $A$, there exist a natural number $h$
and a finite subset $S \subset A$, such that $A$ is linearly spanned
by elements of the form $a_1^{k_1} \dots a_n^{k_n}$, where $n \le h$, 
$k_1, \dots, k_n \in \mathbb N$, and $a_i\in S$.
\end{theorem}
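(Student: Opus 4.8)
The plan is to reduce everything to combinatorics on words and to extract the height bound from two ingredients: a rewriting process driven by the polynomial identity, and a purely combinatorial lemma of Shirshov on words over a finite alphabet. Fix generators $x_1, \dots, x_m$ of $A$, so that $A$ is spanned by the words in the $x_i$, and order the free monoid lexicographically via $x_1 < \dots < x_m$. By the standard multilinearization procedure I may assume $A$ satisfies a multilinear identity $\sum_{\sigma \in S_d} \alpha_\sigma\, y_{\sigma(1)} \cdots y_{\sigma(d)} = 0$ of degree $d$; fixing one permutation $\tau$ with $\alpha_\tau \ne 0$ and relabelling, I may solve this identity for the single monomial $y_{\tau(1)} \cdots y_{\tau(d)}$.

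First I would introduce the notion of a $d$-\emph{decomposable} word: a word admitting a factorization $w = w_0 u_1 w_1 \cdots u_d w_d$ whose blocks satisfy $u_1 > u_2 > \cdots > u_d$ lexicographically, with a technical condition ensuring that the descending concatenation $u_1 u_2 \cdots u_d$ is the lexicographically largest arrangement of the blocks. Substituting the blocks into the solved identity, assigning them to the $y_i$ so that $y_{\tau(1)} \cdots y_{\tau(d)}$ becomes exactly the descending arrangement, expresses $w$ as a linear combination of words obtained by permuting the blocks, each strictly smaller in lexicographic order. Since the identity is multilinear, all these words have the same length as $w$, and because the lexicographic order on words of a fixed length is a finite total order, the rewriting terminates. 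I therefore conclude that $A$ is spanned by words containing no $d$-decomposable subword.

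The combinatorial heart is Shirshov's lemma: for the alphabet size $m$ and any exponent $t$ there is a bound $N = N(m,d,t)$ such that every word over $m$ letters of length at least $N$ either is $d$-decomposable or contains a subword $v^t$ with $1 \le |v| < d$. Taking $S$ to be the finite set of all words in $x_1, \dots, x_m$ of length $< d$, I would feed this lemma into a decomposition of a non-$d$-decomposable word $w$: each application isolates a high power $v^t$ of a base $v \in S$, the gaps between consecutive such powers being of bounded length. A counting argument, bounding how many distinct bases can occur and how long the aperiodic pieces between them can be, then shows that, modulo the rewriting above, $w$ equals a product $a_1^{k_1} \cdots a_n^{k_n}$ with $a_i \in S$ and $n$ no larger than a height $h = h(m,d)$ depending only on $m$ and $d$. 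This is exactly the asserted form.

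I expect Shirshov's combinatorial lemma to be the main obstacle. Its proof is a delicate induction on words of Ramsey--Dilworth type: one shows that a word simultaneously avoiding $t$-th powers of short subwords and $d$-fold lexicographic descents cannot be long, by examining the sequence of occurrences of fixed-length subwords and forcing either a repetition (a power) or a descending chain (decomposability). The PI-driven step is comparatively routine once one notes that the multilinear identity is length-preserving and lexicographically order-decreasing, which is what makes the rewriting terminate; the genuine depth of the theorem, and the place where the real work is needed, lies in establishing the length bound $N(m,d,t)$ and in converting it into the explicit height $h$.
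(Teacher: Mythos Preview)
The paper does not prove this statement: Shirshov's height theorem is quoted as a classical 1957 result and serves only as background and motivation, with references to surveys \cite{bbl}, \cite{belov-rowen}, \cite{kemer} for details. The paper's own work is Theorem~\ref{main} on graded algebras, proved via the combinatorial Lemma about intervals in sequences of group elements; none of that machinery bears on the height theorem itself.

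Your sketch is, in fact, a faithful outline of the classical proof: multilinearize the identity, use it to rewrite $d$-decomposable subwords as lexicographically smaller words of the same length (so the rewriting terminates), and invoke Shirshov's combinatorial lemma to force every long non-$d$-decomposable word to contain a high power of a word of length $<d$. You correctly identify the combinatorial lemma as the nontrivial core and the rewriting step as routine. One point to make precise in an actual write-up is the ``technical condition'' on the blocks $u_1,\dots,u_d$: the standard formulation requires that $u_1u_2\cdots u_d$ be strictly lexicographically larger than every nontrivial rearrangement $u_{\sigma(1)}\cdots u_{\sigma(d)}$, which is what guarantees the rewriting is strictly decreasing; merely asking $u_1>u_2>\cdots>u_d$ is not quite enough because one $u_i$ could be a prefix of another. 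But there is no comparison to draw with the present paper, since the paper offers no proof of this theorem.
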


``Algebra'' means an associative algebra defined over a commutative ring with 
unit. As usual, an algebra is called PI, if it satisfies a nontrivial
polynomial identity.

This theorem immediately yields a positive solution of Kurosh's problem
about finite-dimensionality of a finitely-generated algebraic PI algebra,
boundedness of the Gelfand--Kirillov dimension of PI algebras, and many other 
similar questions, and has sparkled a lot of subsequent work 
(see \cite{bbl}, \cite{belov-rowen} and \cite{kemer} for excellent surveys). 
In the majority of these applications, one utilizes solely the condition 
of the conclusion of Shirshov's height theorem, without further appeal
to PIness of algebra in question. Therefore, this condition appears to be of 
independent interest. 

\begin{definition}
[\protect{for example, \cite[\S 7.3]{aljadeff-belov} or \cite{belov-rowen}}]
A finite subset $S$ of an algebra $A$ is called a 
\emph{Shirshov base of $A$ of height $h$} if $A$ is linearly spanned
by elements of the form $a_1^{k_1} \dots a_n^{k_n}$, where $n \le h$, 
$k_1, \dots, k_n \in \mathbb N$, and $a_i\in S$.
\end{definition}

Thus Shirshov's height theorem says that a finitely-generated PI algebra
has a Shirshov base. The converse is, obviously, not true:
if $L$ is any nonabelian fi\-ni\-te-di\-men\-si\-o\-nal Lie algebra
defined over a field of characteristic zero, then its universal enveloping 
algebra has a Shirshov base of height $\dim L$ due to the 
Poincar\'e-Birkhoff-Witt theorem, and is not PI due to a result of Bahturin
(see \cite[\S 6.7.1]{bahturin-book}).
One can easily construct a multitude of other, ad-hoc, examples of non-PI 
algebras having a Shirshov base: one of the simplest such examples is an algebra
generated by two elements $x$, $y$ subject the single relation $xy = y^2 x$. 

\bigskip

Now let us turn to another circle of results in PI theory, establishing
PIness of an algebra from PIness of its subalgebra distinguished with the help 
of an additional structure (such as being a set of fixed points with respect to 
some action). In particular, consider an algebra graded by a group $G$:
\begin{equation}\label{grading}
A = \bigoplus_{g\in G} A_g \>,
\end{equation}
where $A_g A_h \subseteq A_{gh}$ for any $g,h\in G$.
The neutral component $A_e$, where $e$ is the neutral element of $G$, 
forms a subalgebra whose properties determine,
to a certain extent, the properties of the whole $A$.

\begin{theorem}[PIness of a graded algebra]\label{theorem-gr}
Let $A$ be an algebra graded by a finite group. If $A_e$ is PI, then $A$ is PI.
\end{theorem}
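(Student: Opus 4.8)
The plan is to reduce everything to a combinatorial statement about long products of homogeneous elements, in the same spirit as Shirshov's argument. First, by the usual multilinearization procedure, which is valid over an arbitrary commutative ground ring, one may assume that $A_e$ satisfies a \emph{multilinear} identity
\[
f(x_1,\dots,x_d)=\sum_{\sigma\in S_d}\alpha_\sigma\,x_{\sigma(1)}\cdots x_{\sigma(d)},
\]
with not all $\alpha_\sigma=0$ and, after renaming the variables, with $\alpha_{\mathrm{id}}\neq 0$. Put $n=|G|$. The target is a nontrivial \emph{multilinear} polynomial $\Phi$ of some degree $N=N(d,n)$ which vanishes on $A$. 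This is the right target for two reasons: a multilinear polynomial is an identity of $A$ as soon as it vanishes on all substitutions by homogeneous elements (since $A$ is spanned by such), so it will be enough to test $\Phi$ on homogeneous tuples; and the bound $N(d,n)$ depends on nothing else, so no finiteness hypothesis on $A$ enters — in particular $A$ need not be finitely generated, although incidentally, when $A$ is finitely generated the same pigeonhole used below shows that $A_e$ is too, which is the bridge to the Shirshov-base setting of the main theorem.

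The combinatorial input is as follows. Let $a_1,\dots,a_N$ be homogeneous with $\deg a_i=g_i$, and look at the partial products $p_j=g_1\cdots g_j$, $0\le j\le N$. Once $N\ge dn$, some value among the $N+1$ elements $p_0,\dots,p_N$ is attained at least $d+1$ times, say at $0\le q_0<q_1<\dots<q_d\le N$; then each block $c_\ell:=a_{q_{\ell-1}+1}\cdots a_{q_\ell}$ lies in $A_e$ (the partial products at its two ends agree), and these $d$ blocks are consecutive and adjacent, so
\[
a_1\cdots a_N=u\,(c_1c_2\cdots c_d)\,v,\qquad u:=a_1\cdots a_{q_0},\quad v:=a_{q_d+1}\cdots a_N.
\]
Substituting $c_1,\dots,c_d\in A_e$ into $f$ and solving for the $\alpha_{\mathrm{id}}$-term yields
\[
\alpha_{\mathrm{id}}\cdot a_1\cdots a_N=-\sum_{\sigma\neq\mathrm{id}}\alpha_\sigma\,u\,c_{\sigma(1)}\cdots c_{\sigma(d)}\,v ,
\]
so that, after this substitution, the word $x_1\cdots x_N$ becomes a linear combination of the words obtained from it by permuting the $d$ distinguished $A_e$-blocks.

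The main obstacle, and the technical heart of the proof, is to assemble this family of relations into a single polynomial identity. The positions $q_0,\dots,q_d$ of the blocks — hence the precise shape of the right-hand side above — depend on the degrees $g_i$; and since $G$ may be non-abelian one cannot get around this by arguing one graded component at a time, because under a homogeneous substitution distinct monomials of a multilinear polynomial land in distinct components $A_{g_{\sigma(1)}\cdots g_{\sigma(N)}}$. The way I would proceed is to handle the $\binom{N+1}{d+1}$ possible block-patterns simultaneously, forming a multilinear $\Phi$ whose vanishing on a homogeneous tuple is forced by the pattern(s) compatible with that tuple, and arranging that the contributions of the incompatible patterns cancel; since this cancellation fails for the naive sum of the relations, a genuine use of the structure of $G$ is needed here, and this is where I expect the real work to be. An alternative packaging, which does not dodge this combinatorics but recasts it, is to pass to the smash product $\widehat A=A\#(\mathbf{k}G)^{*}$: by Cohen--Montgomery duality $A$ embeds in $\widehat A$, and $\widehat A$ has a complete set of $n$ orthogonal idempotents all of whose corner rings are isomorphic to $A_e$, so one is reduced to showing that such a generalized $n\times n$ matrix ring over a PI ring is PI, with the same partial-product combinatorics — now read off on the index set $G$ — to be carried through there. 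Once $\Phi$ is built and checked on homogeneous tuples, multilinearity immediately promotes it to a polynomial identity of $A$, which finishes the proof.
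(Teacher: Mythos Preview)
First, a framing point: the paper does not give its own proof of this theorem. It is quoted as a known result of Bergen--Cohen, later sharpened by Bahturin, Giambruno and Riley, with references to \cite[Theorem~4.3.3]{gz} and \cite[\S 2]{bahturin}; the paper's own contribution is the analogous statement for Shirshov bases (Theorem~\ref{main}). So there is no in-paper argument to compare yours against---one can only ask whether your sketch stands on its own and how it relates to the cited proofs. On that score, your pigeonhole on the partial products $p_j=g_1\cdots g_j$, producing $d$ adjacent blocks in $A_e$, is exactly the combinatorial lemma the paper itself singles out (``Lemma~2.2.2 in the survey \cite{bahturin}''), and the smash-product alternative you mention is essentially the original Bergen--Cohen route. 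So the strategy is the right one.

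But the proposal is not a proof, and you say so yourself. The relation
\[
\alpha_{\mathrm{id}}\,a_1\cdots a_N=-\sum_{\sigma\ne\mathrm{id}}\alpha_\sigma\,u\,c_{\sigma(1)}\cdots c_{\sigma(d)}\,v
\]
that you extract depends on the block positions $q_0<\dots<q_d$, which depend on the degree sequence $(g_1,\dots,g_N)$; different homogeneous tuples yield different right-hand sides, so no single multilinear $\Phi$ drops out. You propose to superpose all $\binom{N+1}{d+1}$ block-patterns and engineer cancellation of the incompatible ones, but you do not carry this out---indeed you note that ``the naive sum of the relations'' fails and that ``this is where I expect the real work to be.'' The smash-product reformulation relocates rather than removes the difficulty, as you also acknowledge: showing that a ring with $n$ orthogonal idempotents and PI diagonal corners is PI is the same combinatorial problem in Peirce coordinates. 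Until that step is actually executed (and in the cited references it is, though not by the naive pattern-sum), what you have is an accurate diagnosis of the obstacle together with the correct opening lemma, not a proof.
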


This theorem was first proved by Bergen and Cohen, and elucidated further
(estimates on the degree of polynomial identities, relaxation of initial assumptions) by 
Bahturin, Giambruno and Riley. 
See the book \cite[Theorem 4.3.3]{gz} and the survey \cite[\S 2]{bahturin} 
for details. 

The purpose of this note is to prove a similar theorem, where PIness is replaced
by the property to have a Shirshov base.

\begin{theorem}[Shirshov base of a graded algebra]\label{main}
Let $A$ be a finitely-generated algebra graded by a finite group. 
If $A_e$ has a Shirshov base of height $h$, then $A$ has a Shirshov base of 
height $<(h+1)|G|$.
\end{theorem}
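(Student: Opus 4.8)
The plan is to exhibit a concrete finite set $S \subset A$ and to control heights via a combinatorial decomposition of words. Since $A$ is finitely generated, I would first replace a finite generating set by the (still finite) set $X$ of homogeneous components of its members; then $A$ is generated by a finite set $X$ of homogeneous elements and is linearly spanned by the words $x_{i_1} \cdots x_{i_p}$ with $x_{i_j} \in X$ (by the nonempty such words, if $A$ has no unit). For such a word $w$ put $h_0 = e$ and $h_t = \deg(x_{i_1}) \cdots \deg(x_{i_t}) \in G$ for $1 \le t \le p$; then the subword $x_{i_{a+1}} \cdots x_{i_b}$ has $G$-degree $h_a^{-1} h_b$, so it lies in $A_e$ whenever $h_a = h_b$.

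The combinatorial core of the argument is the claim that every word $w = x_{i_1} \cdots x_{i_p}$ can be written as $w = A_0 g_1 A_1 g_2 \cdots g_k A_k$ with $k \le |G| - 1$, where each $g_j \in X$ is a single letter and each $A_j$ is a (possibly empty) word in $X$ of degree $e$, hence an element of $A_e$. To prove this I would run a greedy ``last occurrence'' procedure: put $q_0 = 0$ and $v_0 = h_0 = e$; having reached a position $q_i$ with value $v_i = h_{q_i}$, let $\ell_i$ be the largest index in $\{q_i, q_i + 1, \dots, p\}$ with $h_{\ell_i} = v_i$ and take $A_i := x_{i_{q_i + 1}} \cdots x_{i_{\ell_i}}$, which has degree $h_{q_i}^{-1} h_{\ell_i} = e$; if $\ell_i = p$ stop, otherwise set $g_{i+1} := x_{i_{\ell_i + 1}}$, $q_{i+1} := \ell_i + 1$, $v_{i+1} := h_{q_{i+1}}$ and continue. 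The $q_i$ strictly increase and are bounded by $p$, so the procedure halts and visibly yields $w = A_0 g_1 A_1 \cdots g_k A_k$. The crucial point is that $v_0, v_1, \dots, v_k$ are pairwise distinct: if $j < i$ then $\ell_j + 1 = q_{j+1} \le q_i < \ell_i + 1 \le p$, so $\ell_i + 1$ is an index $\ge q_j$ strictly exceeding $\ell_j$, the last occurrence of $v_j$ among such indices, whence $v_{i+1} = h_{\ell_i + 1} \ne v_j$. Consequently $k + 1 = |\{v_0, \dots, v_k\}| \le |G|$, i.e.\ $k \le |G| - 1$.

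Granting this, the theorem follows quickly. Let $S_0 \subset A_e$ be a Shirshov base of $A_e$ of height $h$, and set $S := S_0 \cup X$, a finite subset of $A$. Writing $w = A_0 g_1 A_1 \cdots g_k A_k$ as above, each $A_j \in A_e$ is a linear combination of products of at most $h$ powers of elements of $S_0$; substituting and expanding, $w$ becomes a linear combination of products of at most $(k+1)h + k$ powers of elements of $S$ — the $k+1$ factors $A_j$ contributing at most $h$ powers each and the $k$ letters $g_j$ one power each (coalescing of adjacent powers with equal base only decreases this). Since $k \le |G| - 1$, this number is at most $|G|h + (|G| - 1) = (h+1)|G| - 1 < (h+1)|G|$; as the words span $A$, the set $S$ is a Shirshov base of $A$ of height $< (h+1)|G|$. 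In the non-unital case one simply discards the empty factors $A_j$; since $w$ itself is nonempty, at least one factor survives and no unit is ever invoked.

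I expect the combinatorial claim of the second paragraph to be the only genuinely substantive step. More naive decompositions — peeling off degree-$e$ prefixes or suffixes, or a crude pigeonhole on the sequence $h_0, \dots, h_p$ — also express $w$ through letters of $X$ and elements of $A_e$, but typically with a number of pieces growing quadratically in $|G|$, which is too lossy for the asserted bound. The ``last occurrence'' device is precisely what keeps the marked prefix-degrees $v_i$ distinct, and hence their number linear in $|G|$.
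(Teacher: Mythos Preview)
Your proof is correct and reaches the same height bound $(h+1)|G|-1$ as the paper, via the same underlying idea (pigeonhole on the sequence of partial products $h_t \in G$), but the combinatorial packaging is different. The paper isolates a separate lemma: among all families of disjoint intervals whose product is $e$, pick one of maximal total length; then the partial-product map is injective on the complement, so the complement has at most $|G|$ points. This yields a decomposition $y_1 a_1 \cdots y_k a_k$ with $a_i \in A_e$ and the $y_i$ words of \emph{total} length $\le |G|-1$. Your greedy ``last occurrence'' construction instead produces $A_0 g_1 A_1 \cdots g_k A_k$ with each $g_j$ a \emph{single} generator and $k \le |G|-1$. Your version is slightly cleaner (the leftover pieces are single letters rather than words whose lengths must be summed) and constructive rather than existential via maximality; the paper's lemma, on the other hand, is stated purely in terms of the group and admits higher-dimensional generalizations the authors allude to. The final arithmetic is identical in both.

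One small slip: your distinctness argument, as written, establishes $v_{i+1}\ne v_j$ only for $j<i$, leaving out $j=i$. That case is even more immediate---$\ell_i+1$ lies in $\{q_i,\dots,p\}$ and strictly exceeds $\ell_i$, the last occurrence of $v_i$ there, so $h_{\ell_i+1}\ne v_i$---but it should be stated (or the hypothesis relaxed to $j\le i$, which your chain of inequalities does not support as written since it uses $q_{j+1}\le q_i$).
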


This theorem is related to Proposition 7.10 from \cite{aljadeff-belov}
which states that a Shirshov base of a graded finitely-generated PI algebra $A$ 
can be chosen ``mainly'' from elements of $A_e$.

One (``combinatorial'') variant of the proof of Theorem \ref{theorem-gr}
involves a certain technical lemma about choosing in an arbitrary 
finite sequence of elements of a finite group, consecutive subsequences such that
the product of all elements in each subsequence is equal to $e$ (see Lemma 2.2.2
in the survey \cite{bahturin}).
Our proof of Theorem \ref{main} is of similar character, utilizing a similar 
simple lemma, but this time we are concerned not about subsequences being 
consecutive, but that their total length would be ``big enough''.

\begin{lemma}\label{petrov}
Let $G$ be a finite group, and $g_1, \dots, g_n \in G$. Then there is
a set of non-overlapping intervals consisting of consecutive elements of the sequence
$\{g_1, \dots, g_n\}$ such that the product of all elements inside each interval is equal 
to $e$, and the sum of lengths of all intervals is greater than $n - |G|$.
\end{lemma}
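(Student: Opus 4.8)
The plan is to use the standard pigeonhole argument on prefix products. Define the partial products $p_0 = e$ and $p_i = g_1 g_2 \cdots g_i$ for $i = 1, \dots, n$. This gives a sequence of $n+1$ elements $p_0, p_1, \dots, p_n$ in the group $G$. If it happens that $p_i = p_j$ for some $i < j$, then the interval $\{g_{i+1}, \dots, g_j\}$ has product $p_i^{-1} p_j = e$. So "collision" among partial products is exactly the mechanism producing intervals with trivial product, and the task reduces to extracting many disjoint collisions that cover almost all indices.

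First I would set up the extraction greedily. Walk along the indices $0, 1, \dots, n$; whenever the current partial product $p_i$ coincides with a previously seen value $p_j$ (with $j$ the largest such earlier index, to keep things simple, though any choice works), record the interval $(j, i]$, remove it, and continue scanning from $i+1$ with a fresh memory — that is, treat the process as a concatenation of "runs" between recorded intervals. A cleaner way to phrase the same idea: among $p_0, \dots, p_n$, look at maximal blocks of indices that are "left over" after all extracted intervals are deleted; within any such leftover block all partial products (suitably renormalized) must be pairwise distinct, hence such a block has at most $|G|$ indices, i.e.\ contributes at most $|G| - 1$ to the count of leftover \emph{elements} $g_k$. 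The key bookkeeping point is that the indices $0, \dots, n$ split into the endpoints glued by the chosen intervals, and after a careful accounting the total number of elements $g_k$ \emph{not} covered by any interval is at most $|G| - 1$, which is strictly less than $|G|$; hence the covered length exceeds $n - |G|$.

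I expect the main obstacle to be the bookkeeping that guarantees the leftover uncovered elements genuinely number fewer than $|G|$ rather than something weaker like $(|G|-1)$ per block with possibly many blocks. The fix is to be greedy in the right direction: process the sequence once from left to right, maintaining the set of partial-product values seen \emph{since the last extracted interval ended}; the first time a value repeats, extract that interval and reset the seen-set to $\{e\}$. Between resets, the number of \emph{distinct} partial products seen is at most $|G|$, but crucially each reset "consumes" one repeated value, so across the whole sequence the uncovered elements accumulate only inside the final incomplete run, giving at most $|G| - 1$ of them in total — not per block. Writing this invariant precisely and verifying it survives each reset is the only delicate part; everything else is immediate from $p_i^{-1}p_j = e$.
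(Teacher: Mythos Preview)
Your prefix-product setup is exactly right, and it is precisely what the paper uses: with $p_0=e$ and $p_i=g_1\cdots g_i$, an equality $p_j=p_i$ produces an interval $(j,i]$ with product $e$. The gap is in the extraction step. The left-to-right greedy you describe --- extract the first collision inside the current run, then reset the memory --- does \emph{not} force the uncovered elements to lie only in the final run. Uncovered elements are created in every run, namely the stretch from the run's start up to the collision index $j$, and these stretches add up.

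Concretely, take $G=\mathbb Z/3\mathbb Z$ and the sequence $g_1,\dots,g_9 = 1,1,2,1,1,2,1,1,2$. The prefix products are $0,1,2,1,2,0,2,0,1,0$. Your greedy sees the first collision at $i=3$ (with $j=1$), extracts $[2,3]$, resets; then extracts $[5,6]$ and $[8,9]$ in the same way. You cover $6$ elements and leave $g_1,g_4,g_7$ uncovered, so the covered length is exactly $n-|G|=6$, not strictly greater. Yet the lemma's bound is attainable here: $[1,5]$ and $[6,9]$ (or even $[1,9]$ alone) cover everything. So the invariant ``uncovered elements accumulate only inside the final incomplete run'' is simply false for this procedure.

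The clean repair, and what the paper does, is to replace the greedy by an extremal argument: take a family $\mathcal S$ of non-overlapping product-$e$ intervals of \emph{maximal} total length, and let $T\subseteq[1,n]$ be the uncovered indices. If $p_k=p_\ell$ for some $k<\ell$ in $T$, then every interval of $\mathcal S$ meeting $[k+1,\ell]$ is strictly contained in it (its endpoints cannot be $k$ or $\ell$, as those lie in $T$), so replacing those intervals by $[k+1,\ell]$ strictly increases the total length --- a contradiction. Hence $k\mapsto p_k$ is injective on $T$; since the same argument applies with $k=0$ (the index $0$ lies in no interval and $p_0=e$), it is injective on $T\cup\{0\}$, giving $|T|\le |G|-1$. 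If you want to keep a constructive flavour, the greedy can be salvaged only by allowing newly found intervals to \emph{swallow} earlier ones (merge $(j,i]$ with any previously extracted intervals it contains); without merging, the per-run leftovers defeat the bound.
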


\begin{proof}
Let $\mathcal S$ be a set of non-overlapping intervals such that the product of elements inside
each interval is equal to $e$, and $\mathcal S$ has a maximal possible sum of lengths of 
intervals among 
all sets of intervals with such properties. Let 
$T = [1,n] \,\backslash\, (\, \bigcup_{I\in \mathcal S} I \,)$.

Define a map $f: [1,n] \to G$ by the rule $f(k) = g_1 g_2 \dots g_k$.
Suppose that $f(k) = f(\ell)$ for some $k,\ell \in T$, $k < \ell$. 
Then $g_{k+1} g_{k+2} \dots g_\ell = e$.
If no interval in $\mathcal S$ intersects the interval $[k+1,\ell]$, then adding 
the latter interval to $\mathcal S$, we get a contradiction with assumption of maximality
of $\mathcal S$.
If some interval $I\in \mathcal S$ intersects $[k+1,\ell]$, then, 
since $k, \ell \notin I$, $[k+1,\ell]$ strictly contains $I$.
Then, replacing $I$ by $[k+1,\ell]$, we again get a contradiction with the
maximality of $\mathcal S$. 

This shows that $f$, being restricted to $T$, is an injection.
Consequently, $|T| \le |G|$, and $\sum_{I\in \mathcal S} |I| > n - |G|$.
\end{proof}

This lemma can be generalized by considering rectangles instead of intervals,
and a particular case of such generalization, formulated in a slightly more 
entertaining way for the case of $G = \mathbb Z/17\mathbb Z$ and the rectangle
$100 \times 100$, was presented as a problem at the 
XXXVI Russian Mathematical Olympiad (see \cite{kvant}).

\begin{proof}[Proof of Theorem \ref{main}]
Let $X$ be a finite generating set of an algebra $A$ with grading 
(\ref{grading}). Decomposing elements of $X$ into homogeneous components, we may
assume that $X$ consists of homogeneous elements: $X = \bigcup_{g\in G} X_g$.
Obviously, it is also enough to consider only homogeneous elements of $A$. Let 
$x_{g_1} x_{g_2} \dots x_{g_n}$, where $x_g \in X_g$, be an arbitrary such 
element. By Lemma, we may represent it as the word
\begin{equation}\label{expr}
y_1 a_1 y_2 a_2 \dots y_k a_k ,
\end{equation}
where $a_i$'s are products of $x_g$'s
from the claimed intervals (and thus each $a_i \in A_e$),
and $y_i$'s are products of $x_g$'s from the remaining intervals.
It could be that the first $y_1 = e$ (i.e., the word actually starts with $a_1$),
depending on whether or not the first interval provided by Lemma starts at the 
point $1$, and, similarly, the last $a_k = e$ (i.e., the word ends with $y_k$),
depending on whether or not the last interval provided by Lemma ends at the 
point $n$.

For a word $w$ in $x_g$'s, denote by $\Length(w)$ its length.
We have 
$$
\sum_{i=1}^k \Length(a_i) \ge n-|G|+1 ,
$$
and hence
$$
\sum_{i=1}^k \Length(y_i) = n - \sum_{i=1}^k \Length(a_i) \le |G|-1 .
$$ 
In particular, the number of all nontrivial $y_i$'s, equal to $k$ or $k-1$, is 
$\le |G|-1$.

Let $S$ be a Shirshov base of $A_e$ of height $h$. Then, 
in expression (\ref{expr}) each $a_i$ is represented as a product of $\le h$
powers of elements of $S$, and each $y_i$ is represented as a product of 
$\Length(y_i)$ elements of $X$. Hence (\ref{expr}) is represented as a product
of 
$$
\le hk + \sum_{i=1}^k \Length(y_i) \le (h+1)|G| - 1
$$
powers of elements from $S \cup X$.
\end{proof}

This proof suggests that the given estimate on the height of a Shirshov base 
of $A$ is very rough and probably could be improved by a bit more sophisticated 
combinatorial arguments.

\bigskip

Whether Theorem \ref{main} can be extended to other classes of (nonassociative) 
algebras? It seems likely so, but it will require new ideas, the present proof
does not generalize to the nonassociative case. Note in this connection that 
Shirshov's height theorem is not true in nonassociative setting in general, 
though some variants of it have been established
for certain class of Lie algebras by Michshenko,
for alternative and $(-1,1)$-algebras by Pchelintsev, 
and for alternative and Jordan algebras by Belov 
(see \cite{bbl}, especially Theorem 2.165 and \S 2.2.4, and 
\cite[\S 2]{belov-rowen}).
The significance of these nonassociative analogs, though, seems to be less
then in the associative case. An analog of Theorem \ref{theorem-gr}
has been established for the so-called Lie type algebras (roughly, algebras
satisfying the condition $a(bc) = \alpha (ab)c + \beta(ac)b$ for any elements
$a,b,c$ of an algebra and some base field elements $\alpha$, $\beta$ depending, 
generally, on $a,b,c$, or a graded version thereof) by Bahturin in Zaicev
(see \cite[\S 3.2]{bahturin}).

\section*{Acknowledgements}

Thanks are due to Alexei Belov(-Kanel) and Irina Sviridova 
for useful remarks.
Petrov was supported
by the Russian Government project 11.G34.31.0053
and RFFI grants 11-01-12092-ofi-m-2011 and 11-01-00677-a.
Zusmanovich was supported by grants ERMOS7 
(Estonian Science Foundation and Marie Curie Actions) and ETF9038 
(Estonian Science Foundation).

\end{document}